\date{}
\title{Homology theory formulas for generalized Riemann-Hurwitz and generalized monoidal transformations \thanks{Dedicated to Professor Samuel Gitler--Hammer \textit{in memoriam}}}
\author{James  F. Glazebrook
and Alberto Verjovsky
\thanks{
This work was partially supported by PAPIIT (Universidad
Nacional Aut\'onoma de M\'exico) \#IN103914.}}
\theoremstyle{plain}
\newtheorem{lemma}{Lemma}[section]
\newtheorem{proposition}{Proposition}[section]
\newtheorem{theorem}{Theorem}[section]
\theoremstyle{definition}
\newtheorem{definition}{Definition}[section]
\newtheorem{example}{Example}[section]
\newtheorem{remark}{Remark}[section]
\numberwithin{equation}{section}
\newcommand{\BM}{{\rm BM}}
\newcommand{\codim}{{\rm codim}}
\newcommand{\coker}{{\rm coker}}
\newcommand{\Hom}{{\rm Hom}}
\newcommand{\rank}{{\rm rank}}
\newcommand{\SM}{{\rm SM}}
\newcommand{\SO}{{\rm SO}}
\newcommand{\Sp}{{\rm Sp}}
\newcommand{\sign}{{\rm sign}}
\newcommand{\vir}{{\rm vir}}
\renewcommand{\a}{\alpha}
\newcommand{\be}{\beta}
\newcommand{\F}{\mathcal F}
\newcommand{\J}{\mathcal J}
\newcommand{\K}{\mathcal K}
\newcommand{\cL}{\mathcal L}
\newcommand{\cS}{\mathcal S}
\newcommand{\T}{\mathcal T}
\newcommand{\U}{\mathcal U}
\newcommand{\V}{\mathcal V}
\newcommand{\bC}{\mathbb{C}}
\newcommand{\bP}{\mathbb{P}}
\newcommand{\bQ}{\mathbb{Q}}
\newcommand{\bR}{\mathbb{R}}
\newcommand{\bZ}{\mathbb{Z}}
\newcommand{\lra}{\longrightarrow}
\newcommand{\ovsetl}[1]{\overset {#1}{\lra}}
\newcommand{\what}{\widehat}
\newcommand{\ul}{\underline}
\newcommand{\wti}{\widetilde}
\newcommand{\sfL}{\mathsf{L}}
\newcommand{\del}{\partial}
\newcommand{\med}{\medbreak}
\begin{document}

\maketitle

\begin{abstract}
In the context of orientable circuits and subcomplexes of these as representing certain singular spaces, we consider characteristic class formulas generalizing those classical results as seen for the Riemann-Hurwitz formula for regulating the topology of branched covering maps and that for monoidal transformations which include the standard blowing-up process. Here the results are presented as cap product pairings, which will be elements of a suitable homology theory, rather than characteristic numbers as would be the case when taking Kronecker products once Poincar\'{e} duality is defined.
We further consider possible applications and examples including branched covering maps, singular varieties involving virtual tangent bundles, the Chern-Schwartz-MacPherson class, the homology L-class, generalized signature, and the cohomology signature class.
\end{abstract}

\medbreak
\textbf{Mathematics Subject Classification (2010)}: 57M12 14C17 32C10 57R19 32H50

\med
\textbf{Keywords}: Riemann-Hurwitz formula, virtual tangent bundle, homology theory, generalized monoidal transformation, stratified pseudomanifold, signature, Chern-Schwartz-MacPherson class, homology L-class, cohomology signature class, blowing-up process.



\section{Introduction}\label{introduction}

Given a topological group with an associated classifying space, we consider certain characteristic class expressions pertaining to a (virtual) bundle theory over spaces that are oriented circuits (viz. triangulated pseudomanifolds). The techniques follow from general topological constructions as presented in \cite{GGV} (cf. \cite{GV1,GV2}) leading to formulas of a generalized Riemann-Hurwitz type, besides similar expressions that may regulate the topology of generalized monoidal transformations which include the standard blowing-up process.

 The development of ideas in this present paper highlights a basic difference from \cite{GGV,GV1,GV2}, namely, that in contrast to using the usual Kronecker pairing in cohomology/homology that leads to actual characteristic numbers, we instead formulate the results applicable to a suitable homology theory over the circuits in question by taking cap product pairings, once fundamental classes are prescribed. This is a possible approach to a theory of ramified maps, or to a generalized blowing-up process in the presence of singular objects.
 When Poincar\'e duality is defined, it can be used to recover the numerical results via the Kronecker pairing in the usual way. Here we consider several instances in order to show how the general results may be applicable. To this extent, particular interest is when the circuits in question (e.g. branching sets) are stratified pseudomanifolds, or possibly singular complex projective varieties in which, for instance, Chern-Schwartz-MacPherson classes can be implemented, as well as the homology L-class, and the cohomology signature class in other cases.

 We dedicate this paper to the memory of Professor Samuel Gitler--Hammer who was our co-author in \cite{GGV}. Samuel was an exceptionally accomplished and gifted mathematician who greatly inspired the many colleagues and students who were privileged to have worked with him. We can imagine that he would have been very interested in our current project with view to a renewed collaboration. Alas, for us, this will not be the case. His presence will be sadly missed by his family, and by the mathematical community at large.


\section{The topological background}\label{topological}

\subsection{Adapted pairs}\label{adapted}

\begin{definition}
Following \cite{GGV}, let $\Lambda$ be a given (commutative) coefficient ring and $(\Xi, \Sigma)$ a pair of CW-complexes, with $\Xi$ of dimension $n$ and $\Sigma$ a subcomplex of codimension $r \geq 2$, so that $H_q(\Xi, \Lambda) = 0$ for $q > n$, and $H_q(\Sigma, \Lambda) = 0$ for $q \geq n-1$. Then the pair $(\Xi, \Sigma)$ is called $(n, \Lambda)$-\emph{adapted} if $H_n(\Xi, \Lambda) \cong \Lambda$, and the following condition holds:
there exists a neighborhood $B(\Sigma)$ of $\Sigma$, such that $\Sigma$ is a deformation retract of the interior $B^{0}(\Sigma)$ of $B(\Sigma)$, and the quotient map $p_n: C_n(\Xi) \lra C_n(\Xi)/C_n(\Xi - B^{0}(\Sigma))$ induces an isomorphism
\begin{equation}
    p_{*}: H_n(\Xi) \ovsetl{\cong} H_n(\Xi-B^{0}(\Sigma)).
\end{equation}
\end{definition}

Having defined an adapted pair we continue from \cite{GGV} to form the subspace $\K$ of $\Xi \times I$, where
\begin{equation}
\K = (\Xi \times \del I) \cup (\Xi-B^{0}(\Sigma)) \times I,
\end{equation}
together with the double $S(\Sigma) \subset \K$, given by
\begin{equation}
S(\Sigma) = (\del B(\Sigma) \times I) \cup (B(\Sigma) \times \del I).
\end{equation}
Now let
$$
\K_1 = (\Xi \times \{0\}) \cup (\Xi-B^{0}(\Sigma)) \times [0, \frac{3}{4}],
$$
and
$$
\K_2 = (\Xi \times \{1\}) \cup (\Xi-B^{0}(\Sigma)) \times [\frac{1}{4}, 1],
$$
and let $S(\Sigma)_i = S(\Sigma) \cap \K_i$, for $i=1,2$.

By this construction, the spaces $\K_i$ are homotopically equivalent to $\Sigma$, and both the spaces $\K/\K_1$ and $S(\Sigma)/S(\Sigma)_1$ are homotopically equivalent to the generalized Thom space $\Xi /(\Xi - B^{0}(\Sigma))$ (cf. \cite{Th}).
It follows from the cofibration
\begin{equation}
S(\Sigma)_1 \lra S(\Sigma) \lra S(\Sigma)/S(\Sigma)_1
\end{equation}
and the above definition that
\begin{equation}
H_n(S(\Sigma)_1, \Lambda) \cong  H_n(S(\Sigma)/S(\Sigma)_1) \cong \Lambda.
\end{equation}
We assume there is a choice of generators for $H_n(\Xi, \Lambda)$ and $H_n(S(\Sigma), \Lambda)$ giving (by definition) an orientation or a fundamental class $[\Xi]$ of $\Xi$ and $[S(\Sigma)]$ of $S(\Sigma)$, respectively.

\begin{remark}\label{top-example}
Observe that the conditions defining an $(n, \Lambda)$-adapted pair above are immediately satisfied when $\Xi$ is a closed (compact without boundary) connected orientable $n$-manifold, and $\Sigma$ is a closed connected and orientable submanifold of codimension $r \geq 2$, with $\Lambda$
 any coefficient ring. This example also applies to topological, PL, as well as smooth (sub)manifolds, with $S(\Sigma)$ a corresponding normal sphere bundle
 (also closed, connected and orientable for $H_n(S(\Sigma), \Lambda)  \cong \Lambda$, given the above topological type of $\Sigma$).
\end{remark}

Let $G$ be a topological group admitting a classifying space $BG$, and suppose $P \in H^*(BG, \Lambda)$ is a cohomology class.
For a given vector bundle $E$ of rank $\ell$ over $\Xi$, with structure group $G$, we have then a corresponding characteristic class
$P = P_E \in H^*(BG, \Lambda)$, and an element $P(E) \in H^*(\Xi, \Lambda)$
defined by $P(E) = \Phi^*_E(P)$, where $\Phi_E: \Xi \lra BG$ is the classifying map. Typically, $G = \U(\ell), \SO(\ell)$ or $\Sp(\ell)$. Recall that $H^*(BG, \Lambda)$ is a polynomial ring in the Chern classes (for complex vector bundles, with $\Lambda = \bZ$); in the Pontrjagin classes and Euler class (for real oriented vector bundles, with $\Lambda = \bZ[ \frac{1}{2}]$); or the corresponding Pontrjagin classes (for symplectic vector bundles, with $\Lambda = \bZ$), and in the Stiefel-Whitney classes (for real vector bundles, with $\Lambda = \bZ_2$). We refer to \cite{BT,Chern,Hirzebruch} for the basic details.

\subsection{Constructing the double $S(\Sigma)$}\label{double}

Starting from these axioms for an $(n, \Lambda)$-adapted pair above, the `double' $S(\Sigma)$ of $\Sigma$ was constructed by general topological means in \cite{GGV}. Here we will use a special, more geometric, case of this construction along the lines of \cite{Atiyah1,Karoubi}, which to some extent follows the approach given in \cite{Vanque}.

We assume $\Xi$ to be a closed and connected
orientable simple (finite) $n$-circuit (or a \emph{triangulated pseudomanifold} in the sense of \cite{Goresky1}; see also \cite{ES,Lefschetz1}), and $\Sigma$ an arbitrary closed and connected subcomplex of (real) codimension $r \geq 2$, such that $(\Xi, \Sigma)$ satisfies the conditions of an $(n, \Lambda)$-adapted pair.

Here $B(\Sigma)$ is taken to be a closed tubular neighborhood of $\Sigma$. For $i=1,2$, let $B_i(\Sigma)$ be two distinct copies of $B(\Sigma)$, together with restriction maps
\begin{equation}\label{restriction-1}
q_i: B_i(\Sigma) \lra \Sigma.
\end{equation}
Identifying $B_1(\Sigma)$ and $B_2(\Sigma)$ along their common boundary $\del B(\Sigma) = \del B_1(\Sigma) = \del B_2(\Sigma)$, the double $S(\Sigma)$ is formed by setting
\begin{equation}\label{double-1}
S(\Sigma) = B_1(\Sigma) \cup_{\del B(\Sigma)} B_2(\Sigma),
\end{equation}
which leads to an $r$-sphere bundle
\begin{equation}\label{double-2}
q: S(\Sigma) \lra \Sigma,
\end{equation}
for which the fundamental class (or orientation) $[S(\Sigma)]$ is prescribed as the union of cycles
\begin{equation}\label{double-3}
[S(\Sigma)] = [B_1(\Sigma)] \cup (-)[B_2(\Sigma)].
\end{equation}

At this stage, it seems fitting for us to recall the general result of \cite[Theorem 1.1]{GGV} as it was expressed in terms of Kronecker pairings ($\langle~,~\rangle$):

\begin{theorem}\label{main-1}
Suppose $(\Xi, \Sigma)$ is an $(n, \Lambda)$-adapted pair and $E, F$ are $G$-bundles over $\Xi$, such that on $\Xi - \Sigma$ there
exists a homotopy
\begin{equation}
\theta: \Phi_E \vert_{\Xi - \Sigma} \sim \Phi_F \vert_{\Xi  - \Sigma}.
\end{equation}
Then there exists a $G$-bundle $\xi_{\theta} \lra S(\Sigma)$ and orientations $[\Xi]$ and $[S(\Sigma)]$, such that for any class $P \in H^n(BG,\Lambda)$, we have the following equality:
\begin{equation}\label{pairing-1}
\langle P(E) - P(F), [\Xi] \rangle = \langle P(\xi_{\theta}), [S(\Sigma)] \rangle .
\end{equation}
\end{theorem}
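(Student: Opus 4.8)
The plan is to realize $\xi_{\theta}$ together with both Kronecker pairings of \eqref{pairing-1} through a single classifying map over $\K$, and then to reduce the asserted equality to a cochain computation governed by a decomposition of $[\Xi]$ adapted to the tube $B(\Sigma)$. Using the homotopy $\theta$, first I would define $\Psi\colon\K\lra BG$ by $\Psi=\Phi_E\circ\mathrm{pr}_{\Xi}$ on $\Xi\times\{0\}$, by $\Psi=\Phi_F\circ\mathrm{pr}_{\Xi}$ on $\Xi\times\{1\}$, and by $\Psi(x,t)=\theta(x,t)$ on $(\Xi-B^{0}(\Sigma))\times I$; this is consistent because $\theta_0=\Phi_E$ and $\theta_1=\Phi_F$ on $\Xi-B^{0}(\Sigma)\subset\Xi-\Sigma$. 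Put $\mathcal E_{\theta}=\Psi^{*}EG$ and $\xi_{\theta}=\mathcal E_{\theta}\vert_{S(\Sigma)}$, using $S(\Sigma)\subset\K$; concretely $\xi_{\theta}$ is $E\vert_{B(\Sigma)}$ over $B_1(\Sigma)$, it is $F\vert_{B(\Sigma)}$ over $B_2(\Sigma)$, and it is clutched over $\del B(\Sigma)$ by the isomorphism induced by $\theta\vert_{\del B(\Sigma)}$. Now fix a cocycle $p$ representing $P$ on $BG$; then $u_E:=\Phi_E^{*}p$ and $u_F:=\Phi_F^{*}p$ represent $P(E)$ and $P(F)$, the cocycle $W:=(\Psi\vert_{S(\Sigma)})^{*}p$ represents $P(\xi_{\theta})$, and the prism (chain-homotopy) operator for the two ends of $(\Xi-B^{0}(\Sigma))\times I$ applied to $\theta^{*}p$ produces a cochain $v$ on $\Xi-B^{0}(\Sigma)$ with $\delta v=u_F-u_E$ there. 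Running everything through the one pair $(\Psi,p)$ is exactly what keeps these choices mutually compatible.

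Next I would exhibit matching fundamental cycles. By the $(n,\Lambda)$-adapted hypothesis $p_{*}\colon H_n(\Xi)\ovsetl{\cong}H_n(\Xi,\Xi-B^{0}(\Sigma))$, and by excision the target is $H_n(B(\Sigma),\del B(\Sigma))\cong\Lambda$; hence the small-simplices theorem lets me represent $[\Xi]$ by a cycle $c_1+c_2$ where $c_1\in C_n(B(\Sigma))$ is a relative cycle carrying the generator $p_{*}[\Xi]$, $c_2\in C_n(\Xi-B^{0}(\Sigma))$, and $\del c_1=-\del c_2$ is a chain on $\del B(\Sigma)$. Prescribing $[S(\Sigma)]$ as in \eqref{double-3} and working in the model $S(\Sigma)=(\del B(\Sigma)\times I)\cup(B(\Sigma)\times\del I)\subset\K$, the class $[S(\Sigma)]$ is represented by $c_1^{(1)}-c_1^{(2)}$ together with a prism chain on $\del B(\Sigma)\times I$, where $c_1^{(i)}$ is the copy of $c_1$ in $B_i(\Sigma)$; one checks this is a cycle, and, via the cofibration $S(\Sigma)_1\lra S(\Sigma)\lra S(\Sigma)/S(\Sigma)_1$ and its identification with the generalized Thom space $\Xi/(\Xi-B^{0}(\Sigma))$, that its class generates $H_n(S(\Sigma),S(\Sigma)_1)\cong H_n(B(\Sigma),\del B(\Sigma))$, so $[S(\Sigma)]$ is a fundamental class corresponding to $[\Xi]$ under these isomorphisms. (Since $S(\Sigma)_1\simeq\Sigma$ and $H^n(\Sigma,\Lambda)=0$, the class $P(\xi_{\theta})$ restricts to $0$ on $S(\Sigma)_1$, so the same argument can equivalently be phrased in relative cohomology via this Thom space.)

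The comparison is then short. Splitting each evaluation over $B(\Sigma)$ and its complement, and using $u_E-u_F=-\delta v$ on $\Xi-B^{0}(\Sigma)$ together with $\del c_2=-\del c_1$, we get
\begin{equation*}
\langle P(E)-P(F),[\Xi]\rangle=\langle u_E-u_F,c_1\rangle+\langle u_E-u_F,c_2\rangle=\langle u_E-u_F,c_1\rangle+\langle v,\del c_1\rangle,
\end{equation*}
whereas, since $W$ restricts to $u_E$ over $B_1(\Sigma)$, to $u_F$ over $B_2(\Sigma)$, and to $\theta^{*}p$ along $\del B(\Sigma)\times I$, and $v$ is the prism transform of $\theta^{*}p$,
\begin{equation*}
\langle P(\xi_{\theta}),[S(\Sigma)]\rangle=\bigl(\langle u_E,c_1\rangle-\langle u_F,c_1\rangle\bigr)+\langle v,\del c_1\rangle=\langle u_E-u_F,c_1\rangle+\langle v,\del c_1\rangle.
\end{equation*}
Hence the two agree, which is precisely \eqref{pairing-1} for the pair of orientations just fixed.

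I expect the genuine difficulty to be the coherence of the cochain bookkeeping: one must be sure that the cochain $v$ built from $\theta$ on $\Xi-B^{0}(\Sigma)$ is exactly the one controlling the clutching of $\xi_{\theta}$ along $\del B(\Sigma)$, which is why the argument should be routed through the single map $\Psi$ on $\K$ and the single cocycle $p$ rather than through independent choices on $\Xi$ and on $S(\Sigma)$; the orientation convention on the $I$-factor that makes the prism term come out with the right sign is precisely the one already built into \eqref{double-3}. A secondary point, needed because $\Xi$ is only an orientable circuit and not a smooth manifold, is that the decomposition $[\Xi]=c_1+c_2$ and the cycle representing $[S(\Sigma)]$ must be produced from the adapted-pair axioms and the small-simplices theorem alone, with no appeal to transversality or to a normal bundle.
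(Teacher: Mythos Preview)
The paper does not actually prove this theorem: it is stated explicitly as a recollection of \cite[Theorem 1.1]{GGV}, with no argument given beyond the preparatory construction of $\K$, $\K_1$, $\K_2$, $S(\Sigma)$ and the cofibration identifying $S(\Sigma)/S(\Sigma)_1$ with the generalized Thom space $\Xi/(\Xi-B^{0}(\Sigma))$. So there is no proof in the paper to compare against, only that scaffolding.

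Your proposal is precisely the argument that this scaffolding is designed to support, and it looks correct. Building a single classifying map $\Psi$ on $\K$ from $\Phi_E$, $\Phi_F$ and $\theta$ is the intended construction of $\xi_{\theta}$ (this is exactly why $\K$ is introduced), and your chain-level computation---splitting a small-simplices representative of $[\Xi]$ as $c_1+c_2$ along $B(\Sigma)$, using the prism cochain $v$ from $\theta$ to handle the complement, and matching this against the cycle $c_1^{(1)}-c_1^{(2)}+\text{(prism on }\partial c_1\text{)}$ for $[S(\Sigma)]$---is clean and the bookkeeping checks. Two small remarks: first, the adapted-pair isomorphism you invoke should indeed be read as $H_n(\Xi)\cong H_n(\Xi,\Xi-B^{0}(\Sigma))$ (the target written in the paper's Definition appears to be a misprint, since $p_n$ is the quotient to the relative chain complex); your use of excision to identify this with $H_n(B(\Sigma),\partial B(\Sigma))$ is the right move. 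Second, your parenthetical that $P(\xi_{\theta})$ vanishes on $S(\Sigma)_1$ because $H^n(\Sigma,\Lambda)=0$ is justified by the hypothesis $H_q(\Sigma,\Lambda)=0$ for $q\geq n-1$ together with universal coefficients. The coherence issue you flag---that the same $v$ must govern both the complement contribution and the clutching along $\partial B(\Sigma)\times I$---is exactly resolved by routing everything through the single pair $(\Psi,p)$, as you do.
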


\subsection{The clutching construction}\label{clutching}

Let $E,F$ be $G$-bundles over $\Xi$. Specifically, we mean that $E,F \lra \Xi$ are vector bundles with structure group $G$.
Since we are considering general characteristic classes, we could also take $E,F$ to be locally free sheaves. Consider now a homomorphism
$\psi: E \lra F$, such that
\begin{itemize}
\item[i)] $\psi:E \vert_{\Xi - \Sigma} \ovsetl{\cong} F\vert_{\Xi - \Sigma}$

\item[ii)] $\psi\vert_{\Sigma}$ has constant rank.
\end{itemize}
The above data involving $\psi$ induces a `clutching function' $\eta$ which is used to clutch $E$ and $F$ over the double $S(\Sigma)$ as outlined below. We refer to \cite{Atiyah1,Karoubi,Vanque} for details of this construction. We have the following exact sequence of vector bundles on $\Sigma$:
\begin{equation}\label{clutching-1}
0 \lra K_1 \lra E\vert_{\Sigma} \ovsetl{\psi} F \vert_{\Sigma} \lra K_2 \lra 0,
\end{equation}
where $K_1 \cong \ker \psi$,  and $K_2 \cong \coker ~\psi$. Further, let
\begin{equation}\label{ell-def}
L := \psi(E\vert_{\Sigma}) \subset F\vert_{\Sigma}.
\end{equation}
In this construction the bundles $E,F$ are pulled back to $S(\Sigma)$ via $q$, and the clutched bundle $\xi \lra S(\Sigma)$ is specified by
\begin{equation}\label{clutching-2}
\xi= (E, \psi, F) \cong (q_1^*K_1, \eta, q_2^* K_2) \oplus q^*L .
\end{equation}
Letting $K= (q_1^*K_1, \eta, q_2^* K_2)$ we can express \eqref{clutching-2} as
\begin{equation}\label{clutching-3}
\xi = K \oplus q^* L.
\end{equation}
Observe also, that by the above construction, we obtain isomorphisms
\begin{equation}\label{clutching-4}
q_1^* i^* E \cong \xi \vert_{B_1(\Sigma)}, ~\text{and}~ q_2^* i^* F \cong \xi \vert_{B_2(\Sigma)},
\end{equation}
and
\begin{equation}\label{clutching-5}
E \vert_{\Sigma} \cong K_1 \oplus L,~ \text{and}~ F \vert_{\Sigma} \cong K_2 \oplus L.
\end{equation}
For $x \in \Sigma$,
\begin{equation}\label{clutching-6}
K_x = (q_1^* K_1\vert_{B_1(\Sigma)_x}, \eta_x,
q_2^* K_2\vert_{B_2(\Sigma)_x}),
\end{equation}
is the vector bundle over $S(\Sigma)\vert_{x \in \Sigma}$ constructed via the transition function $\eta_x$, seen as the restriction
of $\eta$ to $\del B(\Sigma)_x$. That is, we have an isomorphism

\begin{equation}\label{clutching-7}
\eta_x: q^*_1 K_1 \vert_{\del B(\Sigma)_x} \ovsetl{\cong} q^*_2 K_2
\vert_{\del B(\Sigma)_x}.
\end{equation}

\begin{lemma}\label{clutch-lemma-1}
For $x,y \in \Sigma$ we have $K_x \cong K_y$, and $q_*(\Phi^*_{K_x}(P) \cap [S(\Sigma)_x])$ is a constant multiple $k \mathbf{1}_{\Sigma}$ of the trivial class in $H^*(\Sigma)$.
\end{lemma}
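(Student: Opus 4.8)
The plan is to extract both assertions from the connectedness of $\Sigma$ together with the local triviality of the $r$-sphere bundle $q\colon S(\Sigma)\lra\Sigma$ of \eqref{double-2}. For the first assertion, fix $x\in\Sigma$ and a connected open set $U\ni x$ over which $q$ is trivial, with a trivialization $S(\Sigma)\vert_U\cong U\times S^{r}$ carrying each fibre $S(\Sigma)_z$ to $\{z\}\times S^{r}$. For $y\in U$ the inclusions $\{x\}\hookrightarrow U$ and $\{y\}\hookrightarrow U$ are homotopic, so the restrictions of $K\lra S(\Sigma)$ to $\{x\}\times S^{r}$ and $\{y\}\times S^{r}$ are isomorphic $G$-bundles over $S^{r}$, i.e. $K_x\cong K_y$. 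Equivalently, by \eqref{clutching-6}--\eqref{clutching-7} the bundle $K_x$ over $S(\Sigma)_x\cong S^{r}$ is clutched from $q_1^{*}K_1$ and $q_2^{*}K_2$ along $\del B(\Sigma)_x\cong S^{r-1}$ via $\eta_x$, the datum $\eta_x$ varies continuously with $x$, and hence its class in $[\,S^{r-1},G\,]$ is locally constant. Since $\Sigma$ is connected, covering it by such neighbourhoods and concatenating the isomorphisms gives $K_x\cong K_y$ for all $x,y\in\Sigma$.

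For the second assertion, the orientation $[S(\Sigma)]$ of \eqref{double-3}, together with the orientability of the sphere bundle underlying it, determines a coherent family of fibre fundamental classes $[S(\Sigma)_z]\in H_{r}(S(\Sigma)_z,\Lambda)$ that, over a connected trivializing $U$, all correspond to one fixed generator of $H_{r}(S^{r},\Lambda)$. For $y\in U$ one may choose the isomorphism $K_x\cong K_y$ above so that it covers an orientation-preserving identification of the base spheres $S(\Sigma)_x\cong S(\Sigma)_y$; naturality of the classifying maps, of $P$, and of the cap product then carries $\Phi^{*}_{K_x}(P)\cap[S(\Sigma)_x]$ to $\Phi^{*}_{K_y}(P)\cap[S(\Sigma)_y]$. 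Since $q$ restricted to a fibre is the constant map onto its base point, $q_{*}$ applied to this class yields the same element $k\in\Lambda\cong H_{0}(\Sigma,\Lambda)$ whether $z=x$ or $z=y$; as $\Sigma$ is connected, the assignment $z\mapsto q_{*}\!\left(\Phi^{*}_{K_z}(P)\cap[S(\Sigma)_z]\right)$ is therefore the constant $k\,\mathbf 1_{\Sigma}$, with $k=\langle P(K_x),[S(\Sigma)_x]\rangle$ for any $x$ (which is $0$ unless $P$ has a homogeneous component of degree $r=\codim\Sigma$).

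I expect the real obstacle to lie in the orientation bookkeeping of the second paragraph rather than in the bundle isomorphism of the first: one must verify that the fibre classes $[S(\Sigma)_z]$ can be chosen consistently along $\Sigma$ --- that is, that $S(\Sigma)\lra\Sigma$ is fibre-orientable, a fact implicit in the definition of $[S(\Sigma)]$ in \eqref{double-3} --- and that the isomorphism $K_x\cong K_y$ may be realized fibrewise over an orientation-preserving identification of base spheres, so that the scalar $k$ is well defined rather than defined only up to sign. One also uses tacitly that $B(\Sigma)$ is an honest (regular) disc-bundle neighbourhood, so that $q$ is genuinely locally trivial in the triangulated-pseudomanifold setting; this is built into \eqref{double-2}.
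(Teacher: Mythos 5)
Your proposal is correct and follows essentially the same route as the paper: the paper joins $x$ and $y$ by a path $c(x,y)$ in the connected space $\Sigma$, notes that $K_1$ and $K_2$ are trivial over that path so that $\eta_x$ and $\eta_y$ are homotopic (hence $K_x\cong K_y$ as bundles over $S^r$), and concludes that the fibre integral is independent of the point; your version replaces the path by a chain of trivializing neighbourhoods, which is the same idea. Your extra care with the fibre orientations and the identification $S(\Sigma)_x\cong S(\Sigma)_y$ is a reasonable supplement that the paper leaves implicit.
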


\begin{proof}
Let $c(x,y)$ is a curve in $\Sigma$ joining two points $x,y \in \Sigma$. Then the restrictions $K_1\vert_{c(x,y)}$, and $K_2\vert_{c(x,y)}$ are trivial. This leads to the following diagram in which the vertical maps are isomorphisms
\begin{equation}
\begin{CD}
q_1^*K_1 \vert_{\del B(\Sigma)_x}   @> \eta_x >>  q_2^*K_2 \vert_{\del B(\Sigma)_x} \\
@V \cong VV   @VV \cong V     \\ q_1^*K_1 \vert_{\del B(\Sigma)_y}      @> \eta_y>> q_2^*K_2 \vert_{\del B(\Sigma)_y}
\end{CD}
\end{equation}
and, modulo these isomorphisms, $\eta_x$ and $\eta_y$ are homotopic. Thus $K_x$ and $K_y$, regarded as bundles on $S^r \cong S(\Sigma)_x \cong S(\Sigma)_y$, are isomorphic. Since $\Sigma$ was taken to be connected, this implies that $q_*(\Phi^*_{K_x}(P) \cap [S(\Sigma)_x])$ is independent of $x$, and thus is a constant multiple, $k \mathbf{1}_{\Sigma}$ of the trivial class in $H^*(\Sigma)$.
\end{proof}

\begin{theorem}\label{main-theorem-1}
Let $K$ and $L$ be as in \eqref{clutching-3}, with $q: S(\Sigma) \Sigma$ the $r$-sphere bundle in \eqref{double-2}.
With respect to the clutched bundle
$\xi = (E, \psi, F)$ in \eqref{clutching-2} and the classifying map $\Phi$, suppose: 1) there exists a splitting
$\Phi_{\xi}^*(P) = \Phi^*_{K}(P) \cup \Phi^*_{q^* L}(P)$, and 2) the cohomological degree of $P_K \leq \rank(K) = r$.
Then in $H_*(\Xi, \Lambda)$ we have the following formula
\begin{equation}\label{formula-1}
(P(E) - P(F)) \cap [\Xi] = i_*(P(L) \cup k \mathbf{1}_{\Sigma} \cap [\Sigma]),
\end{equation}
where $k$ is a constant.
\end{theorem}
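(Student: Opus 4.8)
The plan is to first upgrade Theorem~\ref{main-1} from a statement about Kronecker numbers to an identity of cap products in $H_{*}(\Xi,\Lambda)$, and then to push that identity down to $\Sigma$ using the splitting of hypothesis~1), fibre integration along the $r$-sphere bundle $q$ of \eqref{double-2}, and Lemma~\ref{clutch-lemma-1}. The two key inputs are already at hand: the construction behind Theorem~\ref{main-1} transfers $(P(E)-P(F))\cap[\Xi]$ to a cap product over $S(\Sigma)$, and Lemma~\ref{clutch-lemma-1} records the value of the fibrewise characteristic class of $K$.

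For the first step, note that the homotopy $\theta$ makes $\Phi_{E}$ and $\Phi_{F}$ agree up to homotopy on $\Xi-B^{0}(\Sigma)$ (since $\Sigma$ is a deformation retract of $B^{0}(\Sigma)$), so $P(E)-P(F)$ is the image of a relative class $\overline{P}\in H^{*}(\Xi,\Xi-B^{0}(\Sigma);\Lambda)$. By naturality of the cap product and the adapted-pair isomorphism $p_{*}\colon H_{n}(\Xi)\ovsetl{\cong}H_{n}(\Xi,\Xi-B^{0}(\Sigma))$ we get $(P(E)-P(F))\cap[\Xi]=\overline{P}\cap p_{*}[\Xi]$. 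The generator $p_{*}[\Xi]$ of $H_{n}(\Xi,\Xi-B^{0}(\Sigma))\cong\Lambda$ corresponds, under the identification of the generalized Thom space $\Xi/(\Xi-B^{0}(\Sigma))$ with $\K/\K_{1}$ and with $S(\Sigma)/S(\Sigma)_{1}$, to $[S(\Sigma)]$, while $\overline{P}$ corresponds to $\Phi^{*}_{\xi}(P)$ via the clutching isomorphisms \eqref{clutching-4}; and the composite $S(\Sigma)\hookrightarrow\K\to\Xi\times I\to\Xi$ carries $S(\Sigma)$ onto the tube $B(\Sigma)$, hence is homotopic to $i\circ q$. Putting these together yields
\[(P(E)-P(F))\cap[\Xi]=i_{*}q_{*}\bigl(\Phi^{*}_{\xi}(P)\cap[S(\Sigma)]\bigr)\qquad\text{in }H_{*}(\Xi,\Lambda),\]
whose image under the augmentation $H_{0}\to\Lambda$ is exactly \eqref{pairing-1}. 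Since $\Xi$ is merely a pseudomanifold one cannot invoke Poincar\'e duality to pass directly from \eqref{pairing-1}, so this geometric Thom-space route (that of \cite{GGV}) is used instead.

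For the second step, recall that $q\colon S(\Sigma)\lra\Sigma$ is an oriented $S^{r}$-bundle with $\dim S(\Sigma)=n=\dim\Sigma+r$, and that \eqref{double-3} gives $[S(\Sigma)]$ the orientation lying fibrewise over $[\Sigma]$. There is then a fibre-integration map $q_{!}\colon H^{d}(S(\Sigma))\to H^{d-r}(\Sigma)$ with $q_{*}(\alpha\cap[S(\Sigma)])=q_{!}(\alpha)\cap[\Sigma]$ and the projection formula $q_{!}(\alpha\cup q^{*}\beta)=q_{!}(\alpha)\cup\beta$. I would first compute $q_{!}\Phi^{*}_{K}(P)$: by hypothesis~2) the components of $\Phi^{*}_{K}(P)$ all have degree $\le r$, and $q_{!}$ kills those of degree $<r$ (they land in $H^{<0}(\Sigma)=0$), leaving only the degree-$r$ component, whose integral over each fibre $S(\Sigma)_{x}$ is, by Lemma~\ref{clutch-lemma-1}, the constant $k$ (the same for all $x$ since $K_{x}\cong K_{y}$ and $\Sigma$ is connected); thus $q_{!}\Phi^{*}_{K}(P)=k\,\mathbf{1}_{\Sigma}$. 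Here hypothesis~2) is indispensable: a component of $\Phi^{*}_{K}(P)$ of degree $>r$ restricts trivially to each fibre $S^{r}$ yet may have nonzero image under $q_{!}$, and hypothesis~2) is precisely what excludes it. Now use the splitting of hypothesis~1), $\Phi^{*}_{\xi}(P)=\Phi^{*}_{K}(P)\cup\Phi^{*}_{q^{*}L}(P)$, together with $\Phi^{*}_{q^{*}L}(P)=q^{*}(P(L))$ (the classifying map of $q^{*}L$ being $\Phi_{L}\circ q$); the projection formula gives $q_{*}(\Phi^{*}_{\xi}(P)\cap[S(\Sigma)])=q_{!}(\Phi^{*}_{K}(P))\cup P(L)\cap[\Sigma]=k\,\mathbf{1}_{\Sigma}\cup P(L)\cap[\Sigma]$, with no residual sign because $q_{!}\Phi^{*}_{K}(P)$ has degree $0$. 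Substituting into the displayed identity of the first step yields $(P(E)-P(F))\cap[\Xi]=i_{*}(P(L)\cup k\,\mathbf{1}_{\Sigma}\cap[\Sigma])$, which is \eqref{formula-1}.

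The main obstacle is not the formal manipulation but the first step together with the orientation bookkeeping: one must re-examine the Thom-space construction of \cite{GGV} at the chain level, tracking the sign in \eqref{double-3}, so that $p_{*}[\Xi]$, $[S(\Sigma)]$ and $[\Sigma]$ are genuinely matched and the constant produced by $q_{!}$ is literally the $k$ of Lemma~\ref{clutch-lemma-1}; and one must establish the compatibility $q_{*}(\alpha\cap[S(\Sigma)])=q_{!}(\alpha)\cap[\Sigma]$---a Gysin/Leray--Hirsch fact for sphere bundles---over a base $\Sigma$ that is only a pseudomanifold, which is most safely done by a direct Mayer--Vietoris argument on the decomposition $S(\Sigma)=B_{1}(\Sigma)\cup_{\del B(\Sigma)}B_{2}(\Sigma)$ rather than by invoking smooth fibre-bundle theory.
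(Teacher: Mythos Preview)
Your proposal is correct and follows essentially the same two-step architecture as the paper's proof: first reduce $(P(E)-P(F))\cap[\Xi]$ to $i_{*}q_{*}\bigl(\Phi^{*}_{\xi}(P)\cap[S(\Sigma)]\bigr)$, then push down to $\Sigma$ via fibre integration, the projection formula, and Lemma~\ref{clutch-lemma-1}.

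The only noteworthy difference lies in how the first reduction is justified. The paper argues directly from the clutching isomorphisms \eqref{clutching-4} and the oriented decomposition \eqref{double-3}: since $q_{1}^{*}i^{*}E\cong\xi\vert_{B_{1}(\Sigma)}$ and $q_{2}^{*}i^{*}F\cong\xi\vert_{B_{2}(\Sigma)}$, one writes $(P(E)-P(F))\cap[\Xi]$ as $i_{*}q_{*}\bigl(P(\xi)\vert_{B_{1}}\cap[B_{1}]-P(\xi)\vert_{B_{2}}\cap[B_{2}]\bigr)$ and then assembles the two pieces into $P(\xi)\cap[S(\Sigma)]$ using \eqref{double-3}. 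You instead lift $P(E)-P(F)$ to a relative class in $H^{*}(\Xi,\Xi-B^{0}(\Sigma))$, identify the generalized Thom space with $S(\Sigma)/S(\Sigma)_{1}$, and track the fundamental class through the cofibration---in effect re-deriving the cap-product upgrade of Theorem~\ref{main-1} from the \cite{GGV} construction. Your route is more explicit about why the identity holds at the level of homology classes (rather than Kronecker numbers), and your final paragraph correctly flags the orientation bookkeeping and the need to establish the Gysin/fibre-integration compatibility over a pseudomanifold base; the paper's proof simply invokes the Gysin sequence \eqref{gysin} and the identity $q_{*}(q^{*}\alpha\cup\beta)=\alpha\cup q_{*}\beta$ without dwelling on these points. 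Both routes are valid, and the second step is identical in substance.
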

\begin{proof}
In view of \eqref{clutching-4}, we have
\begin{equation}\label{main-1-1}
(P(E) - P(F)) \cap [\Xi] = i_*q_*(P(\xi) \vert_{B_1(\Sigma)} \cap [B_1(\Sigma)] - P(\xi) \vert_{B_2(\Sigma)} \cap [B_2(\Sigma)]).
\end{equation}
Recalling \eqref{double-3}, we see that \eqref{main-1-1} equals
\begin{equation}\label{main-1-2}
i_*q_*(P(\xi) \vert_{B_1(\Sigma)} \cap [B_1(\Sigma)] + P(\xi) \vert_{B_2(\Sigma)} \cap (-)[B_2(\Sigma)]) = i_*q_*(P(\xi) \cap [S(\Sigma)]).
\end{equation}
Consider the Gysin sequence applied to $q: S(\Sigma) \lra \Sigma$ (see e.g. \cite{BT,Hirzebruch}). We have a long exact sequence:
\begin{equation}\label{gysin}
\ldots H^i(S(\Sigma)) \ovsetl{q_{*}} H^{i-r}(\Sigma) \ovsetl{\cup e} H^{i+1}(\Sigma) \ovsetl{q^*} H^{i+1}(S(\Sigma)) \ldots
\end{equation}
With some abuse of notation $q_*$ in \eqref{gysin} above denotes integration along the fiber, and $\cup e$ denotes cup product with the Euler class.
Let $\a \in \Phi^*_L(P)$, and $\be \in \Phi^*_K(P)$. Then if $q_{*}$ and $q^{*}$ are the maps in \eqref{gysin}, we have via fibre-integration along $S(\Sigma) \vert_{x \in \Sigma}$, the equality $q_{*}(q^*(\a) \cup \be) = \a \cup q_{*}(\be)$. In view of 1) and 2) above, we have then
\begin{equation}\label{main-1-3}
\begin{aligned}
i_* q_*(P(\xi) \cap [S(\Sigma)]) &= i_* q_* (\Phi^*_{\xi}(P) \cap [S(\Sigma)]) \\
&= i_* q_*((\Phi^*_{q^*L}(P) \cup \Phi^*_K(P)) \cap [S(\Sigma)])\\
&= i_*( q_*((\Phi^*_{q^*L}(P) \cup \Phi^*_K(P)) \cap [S(\Sigma)])).
\end{aligned}
\end{equation}
For $x \in \Sigma$, integration over the fiber of \eqref{main-1-3} yields
\begin{equation} \label{main-1-4}
\begin{aligned}
&~ i_*((\Phi^*_L(P) \cap [\Sigma]) \cup q_*(\Phi^*_K(P) \cap [S(\Sigma)_x])) \\
&=
i_*((\Phi^*_L(P) \cap [\Sigma]) \cup q_*(\Phi^*_{K_x}(P) \cap [S(\Sigma)_x])).
\end{aligned}
\end{equation}
Applying Lemma \ref{clutch-lemma-1}, we finally obtain
\begin{equation}
i_*(\Phi^*_L(P) \cup k\mathbf{1}_{\Sigma} \cap [\Sigma]) = i_*(P(L) \cup k \mathbf{1}_{\Sigma} \cap [\Sigma]).
\end{equation}
\end{proof}

\begin{remark}\label{hypothesis-remark}
In applying this result to complex vector bundles (taking $\Lambda = \bZ$), with $P$ corresponding to the total (or top) Chern class $c_{*}$, for instance,  (or $c_{top}$), we see that both assumptions 1) and 2) in Theorem \ref{main-theorem-1} are satisfied (with $r$ the rank of $L$ as a complex vector bundle), so that $\Sigma$ is of real codimension $2r$, with cohomological degree $c_{*} (L) \leq 2r$. Likewise, if $\Phi^* P(~)= e(~)$ is the Euler class of a real oriented vector bundle.
\end{remark}


\section{Examples and applications}

\subsection{A homology generalized Riemann-Hurwitz formula}\label{rh-formula}

In the following we will be considering situations of the following type.
Let $f: (\Xi, \Sigma) \lra (\Xi', \Sigma')$ be a simplicial map of $(n,\Lambda)$-adapted pairs
such that
\begin{itemize}
\item[i)] $f^{-1}(\Sigma') = \Sigma$, and

\item[ii)] $f: \Xi - \Sigma \lra \Xi' - \Sigma'$ is a homeomorphism,
\end{itemize}
(again, $\Sigma \subset \Xi$ and $\Sigma' \subset \Xi'$ are taken to be subcomplexes of $\Xi$ and $\Xi'$ respectively).
Consider then the diagram
\begin{equation}\label{diagram-1}
\begin{CD}
\Sigma @> i>>  \Xi \\
@V g VV   @VV f V     \\ \Sigma'     @> j >> \Xi'
\end{CD}
\end{equation}

\begin{example}\label{rh-smooth}
Let us first see how \eqref{formula-1} looks when the adapted pairs $(\Xi, \Sigma)$ and $(\Xi', \Sigma')$ are smooth manifolds with
$\dim_{\bR} \Xi = \dim_{\bR} \Xi'$, together
with $E = T\Xi$, and $F = f^*T\Xi'$ (and thus $\rank_{\bR} E = \rank_{\bR} F$). Here we take the morphism $\psi$ of \S\ref{clutching} to have constant rank equal to $\dim_{\bR} \Sigma$, with an isomorphism
\begin{equation}\label{isomorphism-1}
\psi: T\Sigma \vert_{\Sigma} \ovsetl{\cong} L = \psi(T\Sigma \vert_{\Sigma}).
\end{equation}
We then obtain from \eqref{formula-1}
\begin{equation}\label{formula-2}
((P(T\Xi) - f^*P(T\Xi')) \cap [\Xi] = i_*((P(T\Sigma) \cup k \mathbf{1}_{\Sigma} \cap [\Sigma]) \in H_*(\Xi).
\end{equation}
\end{example}
Riemann-Hurwitz type formulas in the smooth category expressed in terms of Kronecker pairings with fundamental classes involving Chern, Pontrjagin and Euler classes, for instance, have been obtained in \cite{Brasselet1,GGV,Nair,Vanque,Schwartz1}.
\begin{example}\label{rh-holomorphic}
If, for instance, $\Xi$ and $\Xi'$ are compact complex manifolds with $\dim_{\bC} \Xi = \dim_{\bC} \Xi'=n$, and $f$ is a holomorphic branched covering map with $\deg f = \ell$, having $\Sigma$ as the (complex) codimension $r=1$ branch set, then for top Chern classes one can derive the term (cf. \cite{Brasselet1,Schwartz1,Vanque}) $c_{n-1}(T \Sigma) \cup k \mathbf{1} = (\ell-1) c_{n-1} (T \Sigma)$, and thus
\begin{equation}
(c_n(T\Xi) - f^*c_n(T\Xi')) \cap [\Xi] = i_*((\ell -1) c_{n-1}(T\Sigma) \cap [\Sigma]).
\end{equation}
\end{example}

\subsection{Virtual tangent bundles}\label{virtual-tangent}

When $\Xi$ is a smooth manifold (so the tangent bundle $T\Xi$ exists), then the
cap-product pairing leads
to the \emph{characteristic homology class} $P_*(\Xi) = P(T\Xi) \cap [\Xi]
\in H_*(\Xi, \Lambda)$. But here we are interested in knowing how much of the traditional theory carries through in the singular case.
To this extent we follow, in part, the review article \cite{Schur}.

So to commence, let us take the category of (possibly singular) projective algebraic
varieties. Let $\Xi$ be a singular variety. Then there is the
problem of defining suitable characteristic classes. But suppose $\Xi$ is
realized as a local complete intersection in a smooth variety $M$, so
that the closed inclusion $\ul{j}: \Xi \lra M$ is a regular
embedding. In this case, the \emph{normal cone} $N_{\Xi}M \lra \Xi$ is
a vector bundle over $\Xi$, and hence one can define the \emph{virtual tangent bundle} of $\Xi$ by
\begin{equation}\label{virtual-1}
T_{\vir}\Xi = [\ul{j}^* TM - N_{\Xi}M] \in K^0(\Xi),
\end{equation}
which is independent of the embedding and thus produces a well-defined
element in the Grothendieck group $K^0(\Xi)$ of vector bundles on
$\Xi$ (see \cite{Fulton2}).

In terms of the characteristic cohomology classes of vector bundles as we have already considered, an \emph{intrinsic homology class} (independent of the
embedding) applies relative to the virtual tangent bundle of $\Xi$, and is
defined as
\begin{equation}\label{virtual-2}
P_*^{\vir} (\Xi) := P(T_{\vir}\Xi) \cap [\Xi] \in H_*(\Xi, \Lambda).
\end{equation}
In this case, $[\Xi] \in H_*(\Xi)$ can be taken as the fundamental class (or the class of the structure sheaf) of $\Xi$ in
\begin{equation}
H_*(\Xi) = \begin{cases} ~H^{\BM}_{2*} (\Xi), ~&\text{the Borel--Moore homology in even degrees},\\
CH_*(\Xi), ~&\text{the Chow group}, \\
G_0(\Xi), ~&\text{the Grothendieck group of coherent sheaves}.
\end{cases}
\end{equation}
In terms of the Gysin homomorphism $\ul{j}^{!}: H_*(M) \lra H_{*-d}(\Xi)$
(where $d$ is the embedding codimension), we also have, in relationship to characteristic classes, that
\begin{equation}\label{virtual-3}
\ul{j}^{!}(P(M)) = \ul{j}^{!}(P(TM) \cap [M]) = P(N_{\Xi}M) \cap P_{*}^{\vir}(\Xi).
\end{equation}

\begin{example}\label{singular-ex-1}
If $\Xi$ and $\Xi'$ are singular varieties admitting closed regular embeddings as above, then Chern classes for virtual tangent bundles can be defined (see e.g. the lecture in \cite{Suwa}). In the case that $\Xi$ and $\Xi'$ are equidimensional, and $\Sigma$ a subvariety, each with fundamental classes defined, then from \eqref{isomorphism-1}, we have
\begin{equation}\label{isomorphism-2}
\psi: T_{\vir}\Sigma \vert_{\Sigma} \ovsetl{\cong} L = \psi(T_{\vir}\Sigma \vert_{\Sigma}).
\end{equation}
Given a rational map $f: \Xi \lra \Xi'$ in the setting of \S\ref{rh-formula}, and applying \eqref{formula-1}, we have for total Chern classes
\begin{equation}
(c(T_{\vir} \Xi) - f^*c(T_{\vir} \Xi')) \cap [\Xi] = i_*(c(T_{\vir} \Sigma) \cup k\mathbf{1}_{\Sigma}  \cap [\Sigma]).
\end{equation}
\end{example}

\subsection{Stratified pseudomanifolds}\label{strat}

To commence, we recall the definition as given in \cite{Friedman1,Goresky1}.
\begin{definition}\label{strat-def}
An \emph{$n$-dimensional PL-stratified pseudomanifold $\cS$} is a piecewise linear space (having a compatible family of triangulations) that also possesses a filtration by closed PL-subspaces
\begin{equation}\label{strat-1}
\cS = \cS^n \supset \cS^{n-2} \supset \cS^{n-3} \supset \cdots \supset \cS^1 \supset \cS^0 \supset \cS^{-1} = \emptyset,
\end{equation}
forming a stratification, that satisfies the following properties:
\begin{itemize}
\item[(1)] $\cS - \cS^{n-2}$ is dense in $\cS$;

\item[(2)] for each $k \geq 2$, $\cS^{n-k} - \cS^{n-k-1}$ is either empty or is an $(n-k)$-dimensional PL-manifold;

\item[(3)] if $x \in \cS^{n-k} - \cS^{n-k-1}$, then $x$ has a distinguished neighborhood that is PL-homeomorphic to $\bR^{n-k} \times c\sfL$, where
$c\sfL$ denotes the open cone on a compact $(k-1)$-dimensional manifold $\sfL$ whose stratification is compatible with that of $\cS$.
\end{itemize}
\end{definition}
A PL-stratified pseudomanifold $\cS$ is oriented (or orientable) if  $\cS - \cS^{n-2}$ has that same property. The sets $\cS^i$ are called the \emph{skeleta} (it can be verified from (2) above that each has dimension $i$ as a PL-complex).  The sets $\cS_i = \cS - \cS^{i-1}$ are called the \emph{strata}. In particular, $\cS - \cS^{n-2}$ are called \emph{regular strata}, and the rest are called \emph{singular strata}.
The space $\sfL$ is called the \emph{link} of $x$, or of the stratum containing $x$. We refer to \cite{Friedman1,Goresky1} for the additional topological characteristics of this sort of pseudomanifold.

\subsection{Maps with branch-like singularities}\label{branch}

 In the context of \S\ref{rh-formula} and \S\ref{strat}, let $f: \Xi \lra \Xi'$ be a smooth map of compact oriented manifolds of equal dimension $n$, with degree $\deg f = \nu_f$. Let $\Sigma$ be a closed, connected PL-stratified pseudomanifold of codimension $2$ in $\Xi$ such that $(\Xi, \Sigma)$ is a $(n, \Lambda)$-adapted pair. We say that $f$ has \emph{branch-like singularities} when the following holds. Let $M$ be a closed oriented $n$-cycle in $\Xi$, such that each $B^{(\nu_i)}:= M \cap \cS^i$ is a smooth connected oriented PL-submanifold of $\Sigma$ on which $f$ has local degree $\nu_i = \vert \deg (f \vert B^{(\nu_i)}) \vert \leq \nu_f$. Furthermore, $M$ is assumed to have empty intersection with the singular strata of $\Sigma$. Note that this construction yields a nested sequence of the $B^{(\nu_i)}$ in accordance with the filtration in \eqref{strat-1}. Moreover, in this setting there is some scope in applying \eqref{formula-1} and \eqref{formula-2} to various characteristic classes. We will present such an application in \S\ref{hirzebruch} below.

\subsection{Total Hirzebruch $\cL$-polynomial and the signature}\label{hirzebruch}

Let $f: \Xi \lra \Xi'$ be a smooth map of compact oriented manifolds of equal dimension $4n$ with $f$ having branch-like singularities on $\Sigma$ as described above, such that $(\Xi, \Sigma)$ is a $(4n, \bQ)$-adapted pair. Recalling the above details, with $M$ a closed oriented $4n$-cycle in $\Xi$, it is possible to obtain from \eqref{formula-1} and \eqref{formula-2} several types of formulas in the Pontrjagin classes of the manifolds in question, whenever these classes can be defined.

Here we recall the total Hirzebruch $\cL$-polynomial \cite{Hirzebruch} on setting $P(~)$ in \eqref{formula-2} by $P(\Xi) = \cL(p_1(\Xi), \ldots, p_n(\Xi))$, where $p_i(\Xi)$ are the Pontrjagin classes of $\Xi$, etc.,  to obtain from \eqref{formula-2}:
\begin{equation}\label{formula-3}
(P(\Xi) - f^*P(\Xi')) \cap [\Xi] = i_*( \sum_{\nu_i} P(B^{(\nu_i)}) \cup k \mathbf{1}_{B^{(\nu_i)}} \cap [B^{(\nu_i)}]).
\end{equation}
When Poincar\'e duality is defined, expressions such as \eqref{formula-2} can be expressed in a numerical form via Kronecker pairings. Thus \eqref{formula-3} becomes for the signature $\sigma$:
\begin{equation}\label{formula-4}
\sigma(\Xi) - \nu_f \sigma(\Xi') = k  \sum_{\nu_i} \sigma(B^{(\nu_i)}),
\end{equation}
where $k$ is a suitable constant.

Note that given a smooth (or PL-locally flat) embedding $i: \Sigma \lra \Xi$, the codimension 2 stratified pseudomanifold $\Sigma$ here admits a \emph{homology $L$-class} in the sense of \cite{CS1,Goresky1} given by
\begin{equation}\label{formula-5}
L(\Sigma) = [\Sigma] \cap i^* \cL (P(\Xi) \cup (1 + \eta^2)^{-1}) \in H_{2*} (\Sigma, \bQ),
\end{equation}
where $P(\Xi) \in H^*(\Xi)$ is the total Pontrjagin class of $\Xi$ and $\eta$ is the Poincar\'e dual of $i_*[\Sigma]$ (see also \S\ref{intersect-1}).
When Poincar\'{e} duality is defined, the latter transforms $L(\Sigma)$ to $\mathcal{L}$ as above.

\begin{remark}
Similar signature formulas have been obtained in \cite{Hirzebruch1,Viro}, and noting that the right-hand side of
\eqref{formula-3} is effectively a residual quantity, see also \cite{Nair}.

In the case of branched coverings of $S^4$ by 4-dimensional closed oriented PL-manifolds (reviewed in \cite{IP}), node singularities of the branching set are removable by suitable cobordisms \cite{IP}. Our approach is different since singularities in the strata are avoided by taking the appropriate intersection with the 4-cycle $M$ in order to create the $B^{(\nu_i)}$, as described above.
\end{remark}

\subsection{Chern-Schwartz-MacPherson classes}\label{csm}

Firstly, we recall the \emph{Chern-MacPherson transformation} (over the field $\bC$) \cite{MacPh1}. To an extent we follow the exposition in \cite{Ohmoto}. For a quasi-projective variety $X$ and proper morphisms this is a natural transformation, when $X$ is smooth, from the constructible function functor to the Chow group functor
\begin{equation}\label{csm-1}
c_*: \F_*(X) \lra CH_*(X),
\end{equation}
satisfying the normalization property
\begin{equation}\label{csm-2}
c_*(\mathbb{I}_X) \lra c(TX) \cap [X] \in CH_*(X).
\end{equation}
On the other hand, if $\Sigma'$ is a possibly singular variety, then $c_*(\Sigma') = c_*(\mathbb{I}_{\Sigma'})$ defines the \emph{Chern-Schwartz-MacPherson class} of $\Sigma'$ in $CH_*(\Sigma')$ \cite{MacPh1}.

There is a way this latter class can be realized as in
\cite{Aluffi2}. Consider a closed embedding $j: \Sigma' \lra X$,
still assuming $X$ is smooth. Resolving singularities, one obtains a birational map $f: \wti{X} \lra X$ such that $\wti{X}$ is smooth, with
$\bar{\Sigma}' := f^{-1}(\Sigma)$. Furthermore,
$\Sigma = \wti{X} - \Sigma'$ is a normal crossing divisor in $X$ with smooth irreducible components $D_1, \ldots, D_k$. By induction on $k$, and properties of $c_*$, it is shown in \cite{Aluffi2} that
\begin{equation}\label{csm-3}
c_*(\mathbb{I}_{\Sigma'}) = f_*\big( \frac{c(T\wti{X})}{\Pi(1+D_i)} \cap [\wti{X}] \big) \in CH_*(X).
\end{equation}
In this last expression, the term $c(T\wti{X})/\Pi(1+D_i)$ is shown to equal the total Chern class $c(E^*)$ where
$E = \Omega_{\wti{X}}^1(\log \Sigma)$ is a locally free sheaf of complex differential 1-forms with logarithmic poles along $\Sigma$ having
$\rank_{\bC} E = \dim_{\bC} \wti{X}$.

\begin{example}\label{log-poles}
Within the setting of \S\ref{rh-formula}, we set $\Xi= \wti{X}$ and $\Xi' = X$. We take $E = \Omega_{\wti{X}}^1(\log \Sigma)$, and set $F = f^*(TX)$. Again we have $L= \psi(E\vert_{\Sigma}) \subset F\vert_{\Sigma}$. Setting $P$ as the total Chern class, and then applying the push-forward $f_*$ to \eqref{formula-1}, we have by \cite[Theorem 1]{Aluffi2}:
\begin{equation}\label{log-poles-1}
\begin{aligned}
~& f_*((c(\Omega_{\wti{X}}^1(\log \Sigma)) - f^*c(TX)) \cap [\wti{X}])\\
& = f_* i_* (c(L) \cup k \mathbf{1}_{\Sigma} \cap [\Sigma]) \\
&= - j_* c_{\SM} (\Sigma') \in CH_*(X),
\end{aligned}
\end{equation}
which gives a further realization of the Chern-Schwartz-MacPherson class in the Chow group $CH_*(X)$.
\end{example}


\subsection{Generalized monoidal tranformations for oriented circuits}\label{gen-circuits}

We return now to the generalized set-up of \S\ref{adapted} and
\S\ref{rh-formula} with regards a simplicial map $f: (\Xi, \Sigma) \lra (\Xi', \Sigma')$ satisfying conditions i) and ii) in \S\ref{rh-formula}.
The hypotheses outlined in \S\ref{rh-formula}, gave rise to the concept of a \emph{generalized monoidal transformation} in \cite{GV1}. So far, the development of ideas motivates the following proposition:

\begin{proposition}\label{thom-prop-1}
Consider an $(n,\Lambda)$-adapted pair $(\Xi, \Sigma)$ as in \S\ref{rh-formula} satisfying conditions i) and ii). Let the maps $i,j$ in \eqref{diagram-1} be embeddings such that the Thom spaces $\T(\Sigma, \Xi), \T(\Sigma', \Xi')$ of the normal bundles $\what{N}, N$, of $i,j$ respectively, are defined.
Then for suitable $P \in H^*(BG, \Lambda)$, we have
\begin{equation}\label{mt-formula-1}
(P(\Xi) - f^* P(\Xi')) \cap [\Xi] = \hat{t}^*(u \cdot a) \cap [\Xi] \in H_*(\Xi),
\end{equation}
where $u$ is the Thom class of $\T(\Sigma, \Xi)$, $a \in H^*(\Sigma)$ is some suitable class, and $\hat{t}: \Xi \lra \T(\Sigma, \Xi)$ is the Thom-Pontrjagin map.
\end{proposition}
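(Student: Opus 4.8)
The plan is to derive \eqref{mt-formula-1} from Theorem~\ref{main-theorem-1} (equivalently, from the residual formula \eqref{formula-1}/\eqref{formula-2}) and then to reinterpret the right-hand term $i_*(P(L)\cup k\mathbf{1}_{\Sigma}\cap[\Sigma])$ through the Thom--Pontrjagin collapse. First I would take $G$-bundles $E,F\lra\Xi$ representing the (virtual) tangent data, so that $P(E)=P(\Xi)$ and $P(F)=f^{*}P(\Xi')$, as in Example~\ref{rh-smooth} and Example~\ref{singular-ex-1}: conditions i) and ii) of \S\ref{rh-formula} supply the isomorphism $\psi:E|_{\Xi-\Sigma}\ovsetl{\cong}F|_{\Xi-\Sigma}$ off $\Sigma$, and $\psi$ is prescribed with constant rank on $\Sigma$. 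For $P$ satisfying the two hypotheses of Theorem~\ref{main-theorem-1} (this is the meaning of ``suitable $P$''), formula \eqref{formula-1} gives $(P(\Xi)-f^{*}P(\Xi'))\cap[\Xi]=i_*(P(L)\cup k\mathbf{1}_{\Sigma}\cap[\Sigma])$ in $H_*(\Xi,\Lambda)$. Since $\mathbf{1}_{\Sigma}$ is the unit this right-hand side is $i_*(a\cap[\Sigma])$ with $a:=P(L)\cup k\mathbf{1}_{\Sigma}=k\,P(L)\in H^{*}(\Sigma,\Lambda)$, which will be the ``suitable class'' of the statement.

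The core of the argument is then a duality identity. Let $\hat{t}:\Xi\lra\T(\Sigma,\Xi)=B(\Sigma)/\del B(\Sigma)$ be the collapse of $\Xi-B^{0}(\Sigma)$ onto the basepoint (here $B(\Sigma)$ is the tubular neighborhood, i.e.\ the disk bundle of $\what{N}$), let $u\in\widetilde H^{r}(\T(\Sigma,\Xi),\Lambda)$ be the Thom class, let $\pi:B(\Sigma)\lra\Sigma$, and write $u\cdot a:=\pi^{*}a\cup u\in\widetilde H^{*}(\T(\Sigma,\Xi),\Lambda)$ for the Thom isomorphism image of $a$. I claim that for every $a\in H^{*}(\Sigma,\Lambda)$,
\begin{equation}\label{duality-claim}
\hat{t}^{*}(u\cdot a)\cap[\Xi]=i_*(a\cap[\Sigma])\in H_*(\Xi,\Lambda).
\end{equation}
To prove \eqref{duality-claim} I would: (a) observe that $\hat t$ factors through $\Xi/(\Xi-B^{0}(\Sigma))$, so $\hat{t}^{*}(u\cdot a)$ lifts to $\bar{w}\in H^{*}(\Xi,\Xi-B^{0}(\Sigma))\cong H^{*}(B(\Sigma),\del B(\Sigma))$ (excision); (b) invoke locality of the cap product -- using a subdivision of a cycle representing $[\Xi]$ adapted to $\{B(\Sigma),\,\Xi-B^{0}(\Sigma)\}$ -- to see that $\bar w\cap[\Xi]$ is the image under $H_*(B(\Sigma))\lra H_*(\Xi)$ of $\bar w|_{B(\Sigma)}\cap[\Xi]_{B(\Sigma)}$, where $[\Xi]_{B(\Sigma)}\in H_n(B(\Sigma),\del B(\Sigma))$ is the image of $[\Xi]$; (c) use the $(n,\Lambda)$-adapted condition $H_n(\Xi)\cong H_n(\Xi,\Xi-B^{0}(\Sigma))$ together with the orientation conventions of \S\ref{adapted}--\S\ref{double} to identify $[\Xi]_{B(\Sigma)}$ with the relative fundamental class $[B(\Sigma),\del B(\Sigma)]$ of the disk bundle; (d) compute, via the defining property of the Thom class, $u\cap[B(\Sigma),\del B(\Sigma)]=[\Sigma]$ (zero section) and then $(\pi^{*}a\cup u)\cap[B(\Sigma),\del B(\Sigma)]=\pi^{*}a\cap[\Sigma]$, which maps to $a\cap[\Sigma]$ under $\pi_{*}$ and then to $i_*(a\cap[\Sigma])$ under $H_*(B(\Sigma))\cong H_*(\Sigma)\ovsetl{i_*}H_*(\Xi)$, since $i$ is the inclusion $B(\Sigma)\hookrightarrow\Xi$ precomposed with the zero section (a homotopy inverse of $\pi$).

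Combining \eqref{formula-1} with \eqref{duality-claim} applied to $a=P(L)\cup k\mathbf{1}_{\Sigma}$ then yields \eqref{mt-formula-1}. The main obstacle is step (c): ensuring that, in the circuit (pseudomanifold) setting where $\Xi$ carries no genuine manifold structure, the fundamental class $[\Xi]$ restricts to the \emph{correctly oriented} relative fundamental class of the tubular neighborhood, so that the Thom class calculation in step (d) produces the right sign. This is precisely what the $(n,\Lambda)$-adapted axioms and the orientation conventions of \S\ref{adapted}--\S\ref{double} (where $[S(\Sigma)]=[B_1(\Sigma)]\cup(-)[B_2(\Sigma)]$ was chosen compatibly) are designed to guarantee; one also needs the structure group $G$ and coefficients $\Lambda$ to be such that $\what{N}$ is $\Lambda$-orientable so that $u$ exists -- the same mild restriction already present in Remark~\ref{hypothesis-remark} and Theorem~\ref{main-1}.
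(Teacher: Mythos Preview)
Your argument is correct, but it takes a genuinely different route from the paper's own proof. The paper does \emph{not} pass through Theorem~\ref{main-theorem-1} or the clutching/double construction at all. Instead it argues directly: from the hypotheses on $i,j$ one builds the extended commutative diagram
\[
\begin{CD}
\Sigma @> \hat{k} >> \what{N} @> i>>  \Xi @> \hat{t}>> \T(\Sigma, \Xi)\\
@V g VV   @V \hat{g} VV @VV f V  @VV h V   \\ \Sigma'  @> k>>  N  @> j>> \Xi' @> t >> \T(\Sigma', \Xi')
\end{CD}
\]
together with the evident excision decompositions of $\Xi$ and $\Xi'$, notes that excision gives $H^*(\Xi,\Xi-\Sigma)\cong H^*(\T(\Sigma,\Xi))$, and then simply invokes the Thom isomorphism: since condition ii) forces $P(\Xi)-f^{*}P(\Xi')$ to restrict trivially to $\Xi-\Sigma$, this difference lifts to a relative class, hence is $\hat{t}^{*}(u\cdot a)$ for \emph{some} $a\in H^{*}(\Sigma)$. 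That is the entire content of the paper's proof; it is deliberately soft and leaves $a$ unidentified.

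What your approach buys is an explicit description of the class $a$: by routing through Theorem~\ref{main-theorem-1} you obtain $a=k\,P(L)$, which is exactly the information one wants in practice (and which the paper only supplies later, in the non-singular blow-up case, by citing \cite{Gitler1}). The price is that you must impose the splitting and degree hypotheses 1)--2) of Theorem~\ref{main-theorem-1}, whereas the paper's direct excision argument needs only that the difference class die off $\Sigma$. Your duality identity \eqref{duality-claim} and its verification via steps (a)--(d) are standard and sound; indeed that computation is precisely the ``topological properties of $\T(\Sigma,\Xi)$ and the Thom class $u$'' that the paper gestures at without spelling out. So: same destination, but the paper takes the short abstract path while you take the longer constructive one that actually names $a$.
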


\begin{proof}
In view of the hypotheses on $i,j$, we have (following e.g. \cite{Gitler1}) a natural commutative diagram
\begin{equation}\label{diagram-2}
\begin{CD}
\Sigma @> \hat{k} >> \what{N} @> i>>  \Xi @> \hat{t}>> \T(\Sigma, \Xi)\\
@V g VV   @V \hat{g} VV @VV f V  @VV h V   \\ \Sigma'  @> k>>  N  @> j>> \Xi' @> t >> \T(\Sigma', \Xi')
\end{CD}
\end{equation}
where $t, \hat{t}$ denote the Thom-Pontrjagin maps, together with excision relations:
\begin{equation}\label{excision-1}
\begin{aligned}
\Xi &= (\Xi- \Sigma) \cup \what{N}, ~ ~  ~  ~ (\Xi - \Sigma) \cap \what{N} = \what{N} - \Sigma,  \\
\Xi' &= (\Xi' - \Sigma') \cup N, ~ ~ (\Xi' - \Sigma') \cap N = N - \Sigma'.
\end{aligned}
\end{equation}
Note that by excision, we have  $H^*(\Xi, \Xi - \Sigma) \cong H^*(\T(\Sigma, \Xi))$. The proposition follows from the topological properties of
$\T(\Sigma, \Xi)$ and the Thom class $u$ of $\T(\Sigma, \Xi)$ relative to pullback via $\hat{t}$ in the far right square of \eqref{diagram-2}.
\end{proof}
In the case of the standard blowing-up process for non-singular varieties (see \S\ref{bu-nonsing} below), the `suitable class' $a \in H^*(\Sigma)$ was explicitly determined, as was the right-hand side of \eqref{mt-formula-1} in terms of total Chern classes in \cite[Theorem (4.3)]{Gitler1}, as exhibited in \eqref{bu-formula-2} below  (cf. \cite{Lascu1,Lascu2,Porteous1}).
\begin{remark}
Once virtual tangent bundles are defined, then following \eqref{formula-1} for connected $n$-circuits in the setting of \S\ref{adapted}, a generalized monoidal transformation  satisfies
\begin{equation}\label{mt-formula-3}
(P(T_{\vir} \Xi) - P(f^*T_{\vir} \Xi')) \cap [\Xi] = i_*(P(T_{\vir} \Sigma) \cup k \mathbf{1}_{\Sigma} \cap [\Sigma]),
\end{equation}
for the appropriate $P \in H^*(BG, \Lambda)$.
\end{remark}

\subsection{Intersection cohomology signature}\label{intersect-1}

Following \cite{Schur}, in the compact case, we have a signature
\begin{equation}\label{sign-1}
\sign(\Xi) = \deg(L_*(\Xi)), ~ \text{with}~ L_*: \Omega(\Xi) \lra H_{2*}(\Xi, \bQ),
\end{equation}
where the latter is the \emph{homology $L$-class transformation} of \cite{CS1} (see also the review of related topics in \cite{Banagl1}). Here $\Omega(\Xi)$ denotes the abelian group of corbordism classes of selfdual constructible complexes. From this it follows that $L_*(\Xi)= L_*([ \J C_{\Xi}])$ is the \emph{homology $L$-class} of \cite{Goresky1} having a distinguished element $\mathbf{1}_{\Xi} = [\J C_{\Xi}]$ the class of their intersection homology complex. Thus $\sign(\Xi)$ is called the \emph{intersection cohomology signature of $\Xi$}.

\begin{proposition}
If $f: (\Xi, \Sigma) \lra (\Xi', \Sigma')$ in \S\ref{rh-formula} is a map of compact spaces, then we have
\begin{equation}
f_*(L_*(\Xi) - L_* (\Sigma)) = L(\Xi') \in H_{2*}(\Xi', \bQ).
\end{equation}
In particular, on taking degrees we have the intersection cohomology signature relationship
\begin{equation}
\sign(\Xi)= \sign(\Xi') + \sign(\Sigma).
\end{equation}
\end{proposition}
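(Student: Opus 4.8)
The plan is to deduce both assertions from two structural features of the Cappell--Shaneson homology $L$-class transformation $L_*$ recorded in \cite{CS1,Schur}: its naturality under proper pushforward, and the fact that it is defined on the cobordism group $\Omega(-)$ of selfdual constructible complexes, so that it is additive on any distinguished triangle whose three terms are (appropriately shifted) Verdier selfdual.

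Since $f$ is proper one has $f_*\circ L_*=L_*\circ f_*$, where on the right $f_*\colon\Omega(\Xi)\lra\Omega(\Xi')$ is induced by $[\K]\mapsto[Rf_*\K]$; because $\J C_{\Xi}$ is selfdual and $Rf_*$ commutes with Verdier duality for proper $f$, the complex $Rf_*\J C_{\Xi}$ is again selfdual, and $f_*L_*(\Xi)=f_*L_*([\J C_{\Xi}])=L_*([Rf_*\J C_{\Xi}])$. The proposition is thereby reduced to the cobordism identity
\[
[Rf_*\J C_{\Xi}]\;=\;[\J C_{\Xi'}]\;+\;j_*[Rg_*\J C_{\Sigma}]\qquad\text{in }\ \Omega(\Xi'),
\]
with $g\colon\Sigma\lra\Sigma'$ and $j\colon\Sigma'\lra\Xi'$ as in \eqref{diagram-1}. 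Applying $L_*$ to this identity, and using naturality once more for $g$ and for the closed inclusion $j$, one gets $f_*L_*(\Xi)=L_*(\Xi')+j_*g_*L_*(\Sigma)=L_*(\Xi')+f_*i_*L_*(\Sigma)$ (the last equality by commutativity of \eqref{diagram-1}), which rearranges to $f_*(L_*(\Xi)-L_*(\Sigma))=L(\Xi')$. For the numerical statement one then applies the augmentation $\deg\colon H_0(-,\bQ)\lra\bQ$: since $\deg\circ f_*=\deg$ and $\deg\circ g_*=\deg$ for any continuous map, and $\sign=\deg\circ L_*$ by \eqref{sign-1}, the homology identity collapses to $\sign(\Xi)-\sign(\Sigma)=\sign(\Xi')$.

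It remains to set up the cobordism identity, and here conditions i) and ii) do the work. Because $\J C$ restricts to $\J C$ on open subsets and $f$ restricts to a homeomorphism $\Xi-\Sigma\ovsetl{\cong}\Xi'-\Sigma'$, the restrictions of $Rf_*\J C_{\Xi}$ and of $\J C_{\Xi'}$ to the open set $\Xi'-\Sigma'$ are canonically identified. Writing $\ell\colon\Xi'-\Sigma'\hookrightarrow\Xi'$ for the open inclusion and comparing, for the two complexes $Rf_*\J C_{\Xi}$ and $\J C_{\Xi'}$, the attaching (recollement) triangles $\,j_*j^!(-)\lra(-)\lra R\ell_*\ell^*(-)\,$ along the open--closed decomposition $(\Xi'-\Sigma',\Sigma')$, one sees that $[Rf_*\J C_{\Xi}]-[\J C_{\Xi'}]$ is the class of a complex supported on $\Sigma'$; when $f$ is suitably stratified this correction is moreover selfdual, by the decomposition theorem applied to the perverse selfdual complex $Rf_*\J C_{\Xi}$. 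Finally, using $f^{-1}(\Sigma')=\Sigma$ together with proper base change along $j$, this $\Sigma'$-supported complex is to be identified, up to cobordism, with $Rg_*\J C_{\Sigma}$. This last identification --- together with the selfduality bookkeeping needed to guarantee that $L_*$ is genuinely additive on the triangles involved (the passage between $j^!$ and $j^*$ of the selfdual inputs) --- is the heart of the matter and the step most in need of care; it is where the codimension-$\geq 2$ and adapted-pair hypotheses on $(\Xi,\Sigma)$ and $(\Xi',\Sigma')$ enter, and a fully rigorous treatment would presumably route it through the Cappell--Shaneson pushforward formula for stratified maps \cite{CS1}, which on the single singular stratum $\Sigma'$ of $f$ specializes to exactly this term.
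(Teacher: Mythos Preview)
Your approach is genuinely different from the paper's, and the gap you flag at the end is real and, as stated, not closed by the machinery you invoke.

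The paper does not touch the derived category at all. It builds an explicit cobordism: take cylinders $\Xi\times[0,1]$ and $\Xi'\times[0,1]$ and glue them along the complements of open tubular neighbourhoods of $\Sigma$ and $\Sigma'$ using the homeomorphism $f\vert_{\Xi-\Sigma}$ from condition ii). After smoothing corners one gets an oriented bordism $S$ with $\partial S=\Xi'+\Sigma-\Xi$ (this is \cite[Proposition (2.2)]{GV1}). Cobordism invariance of $L_*$ then gives the homology identity, and taking degrees gives the signature identity. The whole argument is geometric and avoids any sheaf-theoretic bookkeeping.

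Your route through $\Omega(\Xi')$ stalls precisely where you suspect. Proper base change along the square \eqref{diagram-1} gives $j^{!}Rf_*\J C_{\Xi}\cong Rg_*\,i^{!}\J C_{\Xi}$ (or $j^{*}Rf_*\cong Rg_*\,i^{*}$), but neither $i^{!}\J C_{\Xi}$ nor $i^{*}\J C_{\Xi}$ is $\J C_{\Sigma}$ in general; that identification holds only under a normally nonsingular hypothesis on $i$, which is not assumed here. Likewise, the Cappell--Shaneson decomposition of $Rf_*\J C_{\Xi}$ produces intersection complexes on closures of strata of the \emph{target} $\Xi'$ twisted by local systems, not $j_*Rg_*\J C_{\Sigma}$; there is no general mechanism by which that decomposition specialises to the term you need. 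So the cobordism identity $[Rf_*\J C_{\Xi}]=[\J C_{\Xi'}]+j_*[Rg_*\J C_{\Sigma}]$ is asserted rather than proved, and the honest disclaimer at the end does not repair it. The paper's geometric cobordism sidesteps this entirely: it never needs to identify any sheaf on $\Sigma'$, because the relation $\Xi\sim\Xi'+\Sigma$ is established at the level of spaces.
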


\begin{proof}
Let $\U^0$ and $\V^0$ be open tubular neighborhoods of $\Sigma$ and $\Sigma'$, respectively, so that $\V^0 = f(\U^0)$, and let $V_1 = \Xi \times [0,1], V_2 = \Xi' \times [0,1]$. Then set $S = V_1 \wedge V_2/ \sim$, where the identification `$\sim$' is given by
\begin{equation}
(x,0) \in (\Xi' \times \{0\}) - (\V^0 \times \{0\}) \sim (f(x), 1) \in (\Xi \times \{1\}) - (\U^0 \times \{1\}).
\end{equation}
Then, by smoothing the corners created by identification $S$ is an
oriented manifold whose oriented boundary is diffeomorphic to $\Xi' +
\Sigma - \Xi$, and $\Xi$ is oriented cobordant to $\Xi' + \Sigma$
\cite[Proposition (2.2)]{GV1}. From this additivity property of cobordism invariants the first statement follows. The second statement then follows by
taking degrees
 (cf. \cite[Corollary (2.3)]{GV1}).
\end{proof}

 \begin{remark}
In \cite{GV1} the role of $\Sigma$ was there denoted by $W$, and it was aptly entitled a \emph{residual circuit}.
\end{remark}

\subsection{The standard blowing-up process}\label{bu-nonsing}

In the following it will be useful to adapt \eqref{diagram-1} using
more familiar information as
follows. We set $\Sigma' = Y$ and $\Xi' = X$, and consider the inclusion $Y
\subseteq X$ of non-singular (smooth) varieties. Let $\Xi =
\wti{X}$ denote the blow-up of $X$ along $Y$, with $\Sigma = \wti{Y}$:
\begin{equation}\label{diagram-3}
\begin{CD}
\wti{Y} @> i>>  \wti{X} \\
@V g VV   @VV f V     \\ Y      @> j >> X
\end{CD}
\end{equation}
We have then a biholomorphism $\hat{f}: \wti{X} - \wti{Y} \lra X - Y$.
In this situation $\wti{Y} = \mathbb{P}(N_YX \oplus \mathbf{1})$ is the projectivized
normal bundle of $Y$ in $X$, that is a locally trivial bundle over $Y$ with fiber $\bC P^r$ ($r = \codim_{\bC} Y$).
Using \cite[Theorem 15.4]{Fulton2}, we
have
\begin{equation}\label{bu-formula-1}
\begin{aligned}
(c(T\wti{X}) - f^* c(TX)) \cap [\wti{X}] &= i_*(g^* c(TY) \cup \lambda \cap
[\wti{Y} ]) \\
&= i_* (c(L) \cup k\mathbf{1}_{\wti{TY}} \cap [\wti{Y}]).
\end{aligned}
\end{equation}

Completely determining the right hand-side of \eqref{bu-formula-1} essentially leads to the Todd-Segre formula for Chern classes relative to the Chow ring of rational equivalence classes of cycles, as was first established by Porteous \cite{Porteous1,Porteous2}. Since then there have been several variations on this theme, exhibiting alternative proofs along with certain generalizations. We refer to e.g. \cite{Aluffi1,Lascu1,Lascu2,Gitler1} for the particular details, but for now, we briefly expose one common line of approach in terms of the geometry of projective bundles.

Suppose $V$ is a rank $r$ complex vector bundle, and consider the projective bundle $p: \bP(V) \lra Y$. There is a short exact sequence
\begin{equation}\label{proj-1}
0 \lra \cL \lra p^*V \lra Q \lra 0,
\end{equation}
where $\cL$ is the canonical line bundle and $Q$ is the quotient bundle. We have then
\begin{equation}
T\bP(V) = p^* TY \oplus B(T\bC P^{r-1}),
\end{equation}
where $B(T\bC P^{r-1})$ denote the bundle along the fibers in \eqref{proj-1}, which in this case is given by
\begin{equation}\label{proj-2}
B(T\bC P^{r-1}) \cong \Hom(\cL, Q) \cong \cL^* \otimes Q,
\end{equation}
so that
\begin{equation}\label{proj-3}
T \bP(V) = p^* TY \oplus \cL^* \otimes Q.
\end{equation}
Applying these considerations to the short exact sequence
\begin{equation}\label{proj-4}
0 \lra \what{N} \lra f^* N \lra Q \lra 0,
\end{equation}
leads, as in \cite{Gitler1} (cf. \cite{Lascu1,Lascu2,Porteous2}), to a realization of the right-hand side of \eqref{bu-formula-1} given by
\begin{equation}\label{bu-formula-2}
\begin{aligned}
(c(T\wti{X}) - f^* c(TX)) \cap [\wti{X}] &= i_*(g^* c(TY) \cup \lambda \cap
[\wti{Y} ]) \\
&= t^*\big( u(\hat{f}^* c(TY) \cup c(\what{N}) \cup B(\what{N}^*,Q)\big) \cap [\wti{X}].
\end{aligned}
\end{equation}

\bigbreak
\noindent
\textbf{Acknowledgement}
We sincerely thank an anonymous referee for encouraging comments, as well as listing a number of proposed corrections which greatly helped in revising an earlier version of this paper.


James F. Glazebrook.\\
 Department of Mathematics and Computer
Science \\
 Eastern Illinois University \\
600  Lincoln Ave., Charleston, IL 61920--3099, USA \\
jfglazebrook@eiu.edu
\\ (Adjunct Faculty)
\\ Department of Mathematics \\ University of Illinois at
Urbana--Champaign\\ Urbana, IL 61801, USA\\

Alberto Verjovsky\\Instituto de Matem\'{a}ticas\\
Universidad Aut\'{o}noma de M\'{e}xico\\
Av. Universidad s/n, Lomas de Chamilpa\\
Cuernavaca CP 62210, Morelos, Mexico\\
alberto@matcuer.unam.mx

\end{document}